\documentclass[12pt]{amsart}

\usepackage{amssymb,amsmath}
\usepackage{hyperref}
\usepackage[all]{xy}
\usepackage{empheq,fancybox}

\theoremstyle{definition}
\newtheorem{theorem}{Theorem}

\newtheorem{definition}{Definition}
\theoremstyle{remark}

\begin{document}

\title{On categories of o-minimal structures}

\author[R. Figueiredo]{Rodrigo Figueiredo}
\thanks{}
\address{Departamento de Matem\'atica, Instituto de Matem\'atica e Estat\'is\-tica, Universidade de S\~ao Paulo, Rua do Mat\~ao 1010, CEP 05508-090, S\~ao Paulo, SP, BRAZIL.}
\email{rodrigof@ime.usp.br}

\author[H. L. Mariano]{Hugo Luiz Mariano}
\address{Departamento de Matem\'atica, Instituto de Matem\'atica e Estat\'is\-tica, Universidade de S\~ao Paulo, Rua do Mat\~ao 1010, CEP 05508-090, S\~ao Paulo, SP, BRAZIL.}
\email{hugomar@ime.usp.br}

%%\author[P. Others]{Possibly Others}

\begin{abstract}
Our aim in this paper is to look at some transfer results in model theory (mainly in the context of o-minimal structures) from the category theory viewpoint. 
\end{abstract}

\keywords{o-minimal structures, first order language, category theory}
\subjclass[2010]{03C64, 03C07, 03Cxx, 06A06, 18A15, 18Axx}

\maketitle

\tableofcontents

\section{Introduction}

Our aim in this paper is to look at some transfer results in the context of o-minimal structures from the category theory viewpoint. Recall that an \textit{o-minimal structure} $\mathcal{M}$ is an expansion of an ordered set $(|\mathcal{M}|,\leq)$ such that every unary set definable in $\mathcal{M}$ (with parameters in $|\mathcal{M}|$) is a finite union of open intervals and points. For a detailed exposition of this topic, see \cite{vandendries-tame}.

In \cite{berarducci-otero} A. Berarducci and M. Otero point out some transfer results with respect to topological properties from one o-minimal structure to another. Specifically, if $\mathcal{M}$ is an o-minimal expansion of an ordered field and $\varphi$ is a first order formula in the language of the ordered rings, then the following statements concerning the definable subsets $\varphi^{\mathcal{M}}$ and $\varphi^{\mathbb{R}}$ hold: (1) $\varphi^{\mathcal{M}}$ is definably connected if and only if $\varphi^{\mathbb{R}}$ is connected; (2) $\varphi^{\mathcal{M}}$ is definably compact if and only if $\varphi^{\mathbb{R}}$ is compact; (3) there is a natural isomorphism between the homology groups $H_*^{\text{def}}(\varphi^{\mathcal{M}})\cong H_*(\varphi^{\mathbb{R}})$; (4) there is a natural isomorphism between the fundamental groups $\pi^{\text{def}}(\varphi^{\mathcal{M}},x_0)\cong \pi(\varphi^{\mathbb{R}},x_0)$; and assuming that $\varphi^{\mathbb{R}}$ is compact it follows that (5) if $\varphi^{\mathcal{M}}$ is a definable manifold, then $\varphi^{\mathbb{R}}$ is a (topological) manifold; and (6) if moreover $\varphi^{\mathcal{M}}$ is definably orientable, then $\varphi^{\mathbb{R}}$ is an orientable manifold.

In \cite{miller-starchenko}, C. Miller and S. Starchenko prove a dichotomy theorem on o-minimal expansions of ordered groups: 
\\

\noindent\textbf{Fact 1} (Theorem A, \cite{miller-starchenko}). Suppose that $\mathcal{R}$ is an o-minimal expansion of an ordered group $(R,<,+)$. Then exactly one of the the following holds: (a) $\mathcal{R}$ is linearly bounded (that is, for each definable function $f\colon R\to R$ there exists a definable endomorphism $\lambda\colon R\to R$ such that $|f(x)|\leq \lambda(x)$ for all sufficiently large positive arguments $x$); (b) $\mathcal{R}$ defines a binary operation $\cdot$ such that $(R,<,+,\cdot)$ is an ordered real closed field. If $\mathcal{R}$ is linearly bounded, then for every definable $f\colon R\to R$ there exists $c\in R$ and a definable $\lambda\in \{0\}\cup \text{Aut}(R,+)$ with $\lim_{x\to +\infty}[f(x)-\lambda(x)]=c$.
\\

Such a dichotomy on o-minimal expansions of ordered groups is the analogue of the subsequent dichotomy for o-minimal expansions of the real field $\mathbb{R}$, due to C. Miller:
\\

\noindent\textbf{Fact 2} (Theorem and Proposition, \cite{miller}). Let $\mathcal{R}$ be an o-minimal expansion of the ordered field of real numbers $(\mathbb{R}, <, +,\cdot, 0,1)$. If $\mathcal{R}$ is not polynomially bounded (that is, for every definable function $f\colon \mathbb{R}\to \mathbb{R}$ there exists $N\in \mathbb{N}$ such that $|f(x)|\leq x^N$ for all sufficiently large positive $x$), then the exponential function is definable (without parameters) in $\mathcal{R}$. If $\mathcal{R}$ is polynomially bounded, then for every definable function $f\colon \mathbb{R}\to \mathbb{R}$, with $f$ not identically zero for all sufficiently large positive arguments, there exist $c,r\in \mathbb{R}$ with $c\neq 0$ such that $x\mapsto x^r\colon (0,+\infty)\to \mathbb{R}$ is definable in $\mathcal{R}$ and $\lim_{x\to +\infty}f(x)/x^r=c$. 
\\

Both Facts 1 and 2 can be viewed as implied transfer results of o-minimality property from one structure to another (see Section 4) and served as our main motivation for this work. 

%%Chamar atencao para resiltados de trnsferencia em teoria o -minimais

%%caso explicito: topologico, berrducci

%%2 cass implicitos: dicotomnsa

%%Enunciar os 2 teoremas Grupos e coros (pois vaos citar e usar)

\section{Preliminaries}

Recall that a \textit{signature} is a triple $L\mathrel{\mathop:}=(\mathcal{F},\mathcal{R},\text{ar})$, where $\mathcal{F}$ and $\mathcal{R}$ are disjoint sets whose members are called respectively \textit{function symbols} and \textit{predicative symbols} and $\text{ar}\colon \mathcal{F}\cup \mathcal{R}\to \mathbb{N}$ is a function which assigns a nonnegative integer, called \textit{arity}, to every function or predicative symbol. A function or a predicative symbol is said to be \textit{$n$-ary} if its arity is $n$. A $0$-ary function symbol is called a \textit{constant symbol}. The cardinality $\text{card}(L)$ of a signature $L=(\mathcal{F},\mathcal{R},\text{ar})$ is defined to be $\text{card}(\mathcal{F})+\text{card}(\mathcal{R})$.

The first-order \textit{language} of a signature $L$ is the set of all (well formed) terms and formulas arising from $L$, and is denoted by $\mathcal{L}$. If we denote by $\text{Term}(L)$ the set of all $L$-terms, and by $\text{Form}(L)$ the set of all $L$-formulas then $\mathcal{L}=\text{Term}(L)\sqcup \text{Form}(L)$.

Let $\mathcal{L}$ and $\mathcal{L}'$ be two first-order languages. A \textit{language morphism} from $\mathcal{L}$ to $\mathcal{L}'$ is a (set-theoretic) map $H\colon \mathcal{L}\to \mathcal{L}'$ such that $h$ maps terms from $\mathcal{L}$ to terms from $\mathcal{L}'$, and formulas from $\mathcal{L}$ to formulas from $\mathcal{L}'$.

\section{A category of the first-order languages}

Fix a countable set of variable symbols $\text{Var}=\{x_i\,:\,i\in \mathbb{N}\}$.

In what follows we make a brief description of the category $\textbf{FOL}$ of the first-order languages. 

Let $\text{Ob}(\textbf{FOL})$ denote the set of all first-order languages. 

Given two languages $\mathcal{L},\mathcal{L}'\in \text{Ob}(\textbf{FOL})$, with underlying signatures $L=(\cup_{n\geq 0} F_n,\cup_{n\geq 0} R_n)$ and $L'=(\cup_{n\geq 0} F_n',\cup_{n\geq 0} R'_n)$ respectively, the correspondence for each $n\geq 0$
\begin{enumerate}
    \item[(i)] $f\mapsto h(f)$, an $\mathcal{L}'$-term whose variable symbols occurring in it are precisely $x_0,\ldots,x_{n-1}$, $f\in F_n$;
    \item[(ii)] $R\mapsto h(R)$, an $\mathcal{L}'$-atomic formula whose variable symbols occurring in it are precisely $x_0,\ldots,x_{n-1}$, $R\in R_n$.
\end{enumerate}
gives rise to a language morphism $H\colon \mathcal{L}\to \mathcal{L}'$, where the restriction $H(t)$ to $\text{Term}(L)$ is given by 
\begin{enumerate}
    \item[(iii)] $H(t)\mathrel{\mathop:}=x_i$, if $t=x_i\in \text{Var}$;
    \item[(iv)] $H(t)=h(f)[H(t_0)/x_0,\ldots,H(t_{n-1})/x_{n-1}]$, if $t=f(t_0,\ldots,t_{n-1})$ with $f\in F_n$ and $t_0,\ldots,t_{n-1}\in \text{Term}(L)$,
\end{enumerate} 
and the restriction $H(\varphi)$ to $\text{Form}(L)$ is defined to be
\begin{enumerate}
    \item[(v)] $H(\varphi)\mathrel{\mathop:}=(H(t)=H(s))$, if $\varphi$ is the $\mathcal{L}$-atomic formula $(t=s)$ with $s,t\in \text{Term}(L)$;
    \item[(vi)] $H(\varphi)\mathrel{\mathop:}= h(R)[H(t_0)/x_0,\ldots,H(t_{n-1})/x_{n-1}]$, if $\varphi$ denotes the $\mathcal{L}$-atomic formula $R(t_0,\ldots,t_{n-1})$ with $R\in R_n$ and $t_0,\ldots,t_{n-1}\in \text{Term}(L)$;
    \item[(vii)] $H(\varphi)\mathrel{\mathop:}=\neg H(\phi)$, if $\varphi$ is the $\mathcal{L}$-formula $\neg \phi$ with $\phi\in \text{Form}(L)$;
    \item[(viii)]$H(\varphi)\mathrel{\mathop:}=H(\phi)\vee H(\psi)$, if $\varphi$ is the $\mathcal{L}$-formula $\phi\vee \psi$ with $\phi,\psi\in \text{Form}(L)$;
    \item[(ix)] $H(\varphi)\mathrel{\mathop:}=\exists x\, H(\phi)$, if $\varphi$ is the $\mathcal{L}$-formula $\exists x\, \phi$ with $\phi\in \text{Form}(L)$ and $x$ a variable symbol in $\text{Var}$.
\end{enumerate}

Observe that $FV(\varphi)=FV(H(\varphi))$, where $FV(\varphi)$ denotes the set of all free variables occurring in $\varphi$.

The composition rule in \textbf{FOL} is given in the most natural way. Indeed, for any language morphisms $H\colon \mathcal{L}\to \mathcal{L'}$ and $H'\colon \mathcal{L}'\to \mathcal{L}''$,  the map $H'\circ H\colon \mathcal{L}\to \mathcal{L}''$ is the language morphism obtained by extending to $\mathcal{L}$, as above, the following associations: for all $n\geq 0$
\begin{itemize}
    \item $f\mapsto H'(h(f))$, $f\in F_n$,
    \item $R\mapsto H'(h(R))$, $R\in R_n$,
\end{itemize}
where $H'$ is the extension to $\mathcal{L}'$ of $h$. The identity element with respect to $\circ$ is the language morphism $1\colon \mathcal{L}\to \mathcal{L}$ obtained from the extension of the rules: for all $n\geq 0$
\begin{itemize}
    \item $f\mapsto f(x_0,\ldots,x_{n-1})$, $f\in F_n$;
    \item $R\mapsto R(x_0,\ldots,x_{n-1})$, $R\in R_n$.
\end{itemize}
In other words, $1\colon \mathcal{L}\to \mathcal{L}$ is the map which associates each $\mathcal{L}$-term to itself, and each $\mathcal{L}$-formula to itself. 
It is not hard to see that $\circ$ and $1$ satisfy the associativity and identity laws. Therefore, \textbf{FOL} is indeed a category.

Note that \textbf{FOL} has a subcategory of ``simple morphisms'' given by $f \in F_n \mapsto f'(x_0, \cdots, x_{n-1})$, $f' \in F'_n$
and  $R \in R_n \mapsto R'(x_0, \cdots, x_{n-1})$, $R' \in R'_n$.

Here and throughout ``language morphism'' will mean ``a morphism constructed in (i)-(ix)'', unless otherwise stated.

\section{Categories of o-minimal structures}

Throughout this section we fix an order relation symbol $<$.
\\

For each language $\mathcal{L}$, $\mathcal{L}_{<}$ stands for its extension $\text{Term}(L\cup \{<\})\sqcup \text{Form}(L\cup \{<\})$, which is an object in $\textbf{FOL}$. Similarly, any morphism $H\colon \mathcal{L}\to \mathcal{L}'$ in $\textbf{FOL}$ can be extended to a morphism $H_{<}\colon \mathcal{L}_{<}\to \mathcal{L}'_{<}$ in $\textbf{FOL}$ as defined in the previous section. Such a morphism $H_{<}$ is the unique language morphism from $\mathcal{L}_{<}$ to $\mathcal{L}'_{<}$ satisfying the equality $H_{<}\circ \imath=\imath'\circ H$, where $\imath\colon \mathcal{L}\to \mathcal{L}_{<}$ and $\imath'\colon \mathcal{L}'\to \mathcal{L}'_{<}$ indicate the inclusion maps.

As usual we denote the category of all locally small categories by $\textbf{CAT}$. The category $\textbf{Str}(\mathcal{L})$ of all ${L}$-structures whose morphisms are the homomorphisms between ${L}$-structures is an object from $\textbf{CAT}$. A (non full) subcategory of $\textbf{Str}(\mathcal{L})$ is the category $\textbf{Str}_{\textbf{e}}(\mathcal{L})$ of all $L$-structures whose morphisms are the elementary homomorphisms (hence embeddings) between $L$-structures. 
We denote by $\textbf{Str}_{\textbf{omin}}(\mathcal{L}_{<})$ the full (small) subcategory of $\textbf{Str}(\mathcal{L}_{<})$ whose objects are the o-minimal ($L\cup \{<\}$)-structures.  

\begin{definition}[Induced functor] \label{functor}
In view of this discussion, we can form the following contravariant functor $\mathcal{E}\colon \textbf{FOL}\to \textbf{CAT}$:
$$
\mathcal{L}\mapsto \textbf{Str}(\mathcal{L})
$$
and
$$
\mathcal{L}\xrightarrow{H} \mathcal{L}'\mapsto  \textbf{Str}(\mathcal{L})\xleftarrow{\mathcal{E}(H)} \textbf{Str}(\mathcal{L}'),
$$
where $\mathcal{E}(H)$ is the functor given by:
\\

\noindent$\bullet$   $\mathcal{M}'\in \text{Ob}(\textbf{Str}(\mathcal{L}'))\mapsto \mathcal{M}\mathrel{\mathop:}=\mathcal{E}(H)(\mathcal{M}')$, with $|\mathcal{M}|\mathrel{\mathop:}=|\mathcal{M}'|\mathrel{\mathop:}=M'$, and for each $f\in F_n$ and each $R\in R_n$ we have $f^{\mathcal{M}}\mathrel{\mathop:}=H(f)^{\mathcal{M}'}\colon M'^n\to M'$ (that is, $f^\mathcal{M}$ is the interpretation of the $L'$-term $H(f)$ in $\mathcal{M}'$) and $R^{\mathcal{M}}\mathrel{\mathop:}=H(R)^{\mathcal{M}'}\subseteq M'^n$ (that is, $R^{\mathcal{M}}$ is the interpretation of the atomic $L'$-formula $H(R)$ in $\mathcal{M}'$). Thus, for any $L$-formula $\varphi(x_0,\ldots,x_{n-1})$ and any valuation $\nu\colon \{x_0,\ldots,x_{n-1}\}\to M'$ we obtain 
    \begin{empheq}[box=\ovalbox]{align*}
    (*)\ \mathcal{M}\models_{\nu}\varphi(x_0,\ldots,x_{n-1})\ \text{if and only}\ \mathcal{M}'\models_{\nu} H(\varphi)(x_0,\ldots,x_{n-1}),
   \end{empheq}
    by induction on the complexity of $\varphi$.
\\

\noindent$\bullet$   $\alpha'\in \text{Hom}_{\textbf{Str}(\mathcal{L}')}(\mathcal{M}'_1,\mathcal{M}'_2)\mapsto \mathcal{E}(H)(\alpha')\mathrel{\mathop:}=\alpha'\in \text{Hom}_{\textbf{Str}(\mathcal{L})}(\mathcal{M}_1,\mathcal{M}_2)$.
\end{definition}

\noindent\textbf{Remark 1.}\label{remarkindfunctor}
There are some variants of the functor $\mathcal{E}$, namely:\footnote{Clearly, other similar contravariant functors can be defined, corresponding to other kinds of morphisms between structures.}
\begin{enumerate}
\item[(a)] the contravariant functor $\mathcal{E}_{\textbf{e}}\colon \textbf{FOL}\to \textbf{CAT}$ given by
$$
\mathcal{L}\mapsto \textbf{Str}_{\textbf{e}}(\mathcal{L})
$$
and
$$
\mathcal{L}\xrightarrow{H} \mathcal{L}'\mapsto  \textbf{Str}_{\textbf{e}}(\mathcal{L})\xleftarrow{\mathcal{E}_e(H)} \textbf{Str}_{\textbf{e}}(\mathcal{L}'),
$$
where $\mathcal{E}_e(H)$ is defined the same way as above for the category $\textbf{Str}(\mathcal{L}')$. It is worth noticing that $\alpha'\in \text{Hom}_{\textbf{Str}_{\textbf{e}}(\mathcal{L}')}(\mathcal{M}'_1,\mathcal{M}'_2)\mapsto \mathcal{E}(H)(\alpha')\mathrel{\mathop:}=\alpha'\in \text{Hom}_{\textbf{Str}_{\textbf{e}}(\mathcal{L})}(\mathcal{M}_1,\mathcal{M}_2)$ is well defined by virtue of ($*$).
\item[(b)] the contravariant functor $\mathcal{E}_<\colon \textbf{FOL}\to \textbf{CAT}$ given by 
$$
\mathcal{L}_<\mapsto \textbf{Str}(\mathcal{L}_<)
$$
and
$$
\mathcal{L}_<\xrightarrow{H_<} \mathcal{L}'_<\mapsto  \textbf{Str}(\mathcal{L}_<)\xleftarrow{\mathcal{E}_<(H_<)} \textbf{Str}(\mathcal{L}'_<),
$$
where $\mathcal{E}_<(H_<)$ is defined analogously to $\mathcal{E}(H)$. 
\end{enumerate}

\begin{theorem}The functor $\mathcal{E}_<(H_<)\colon \textbf{Str}(\mathcal{L}'_{<})\to \textbf{Str}(\mathcal{L}_{<})$ (see Remark 1(b)) maps o-minimal structures in the language $\mathcal{L}'_{<}$ to o-minimal structures in the language $\mathcal{L}_{<}$, in other words, the following diagram commutes
$$
\xymatrix{
\textbf{Str}(\mathcal{L}'_{<})\ar[r]^{\mathcal{E}_<(H_{<})} & \textbf{Str}(\mathcal{L}_{<})\\
\textbf{Str}_{\textbf{omin}}(\mathcal{L}'_{<})\ar@{^{(}->}[u]\ar[r]_{\mathcal{E}_<(H_{<})|} &\textbf{Str}_{\textbf{omin}}(\mathcal{L}_{<})\ar@{^{(}->}[u]
} 
$$
where $\mathcal{E}_<(H_{<})|$ denotes the restriction of $\mathcal{E}_<(H_{<})$ to the subcategory $\textbf{Str}_{\textbf{omin}}(\mathcal{L}_{<})$.
\end{theorem}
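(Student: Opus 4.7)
The plan is to show that for any o-minimal $\mathcal{M}' \in \textbf{Str}_{\textbf{omin}}(\mathcal{L}'_<)$, its image $\mathcal{M} := \mathcal{E}_<(H_<)(\mathcal{M}')$ has the property that every definable subset of $|\mathcal{M}|$ in one variable (with parameters) is a finite union of points and open intervals with respect to $<^{\mathcal{M}}$. The argument essentially consists of applying the satisfaction-transfer equivalence $(*)$ and verifying that the order symbol is preserved.

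First I would observe that the extension $H_<$ of $H$ is constructed so that the distinguished symbol $<$ is sent to the atomic formula $<(x_0,x_1)$; this is what the condition $H_<\circ\imath=\imath'\circ H$ together with the definition of the extension in Section~3 is enforcing on the $<$-symbol. Consequently, when we compute the interpretation at the structure level we obtain
\[
<^{\mathcal{M}} \;=\; H_<(<)^{\mathcal{M}'} \;=\; <^{\mathcal{M}'},
\]
so the ordered set underlying $\mathcal{M}$ coincides as an ordered set with the one underlying $\mathcal{M}'$. In particular, ``open interval'' and ``point'' in $(|\mathcal{M}|,<^{\mathcal{M}})$ and in $(|\mathcal{M}'|,<^{\mathcal{M}'})$ refer to exactly the same subsets of the common universe $M=M'$.

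Next, let $\varphi(x,y_0,\ldots,y_{k-1})$ be any $\mathcal{L}_<$-formula and fix parameters $a_0,\ldots,a_{k-1}\in M$. Consider the definable subset
\[
D \;=\; \{\,b\in M : \mathcal{M}\models \varphi(b,a_0,\ldots,a_{k-1})\,\}.
\]
Applying $(*)$ (in the $\mathcal{L}_<$-version from Remark~1(b)) with the valuation $\nu(x)=b,\ \nu(y_i)=a_i$, I obtain
\[
D \;=\; \{\,b\in M' : \mathcal{M}'\models H_<(\varphi)(b,a_0,\ldots,a_{k-1})\,\},
\]
so $D$ is a definable subset of $|\mathcal{M}'|$ in the $\mathcal{L}'_<$-formula $H_<(\varphi)$ with parameters $a_0,\ldots,a_{k-1}\in M'$. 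By o-minimality of $\mathcal{M}'$, this set is a finite union of points and open intervals with respect to $<^{\mathcal{M}'}$. By the previous paragraph this is the same as a finite union of points and open intervals with respect to $<^{\mathcal{M}}$, so $\mathcal{M}$ is o-minimal. This shows that the restriction $\mathcal{E}_<(H_<)|$ lands in $\textbf{Str}_{\textbf{omin}}(\mathcal{L}_<)$ and makes the diagram commute.

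There is no deep obstacle here; the one point deserving care is the verification that $H_<$ really sends the symbol $<$ to the atomic formula $x_0<x_1$ (so that $<^{\mathcal{M}}=<^{\mathcal{M}'}$). Once that is fixed, the rest of the proof is a direct application of the fundamental translation property $(*)$ together with the definition of o-minimality, which does not involve anything more than what has already been established in Definition~\ref{functor} and Remark~\ref{remarkindfunctor}.
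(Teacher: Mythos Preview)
Your proof is correct and follows essentially the same route as the paper's: the paper's argument is the one-line observation that the result ``follows immediately from $(*)$ and the fact $FV(\varphi)=FV(H_<(\varphi))$''. Your write-up is simply a careful unpacking of that sentence, with the added (and worthwhile) explicit check that $H_<$ fixes the order symbol so that $<^{\mathcal{M}}=<^{\mathcal{M}'}$, a point the paper leaves implicit in the construction of $H_<$.
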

\begin{proof}
It follows immediately from ($*$) and the fact $FV(\varphi)=FV(H_<(\varphi))$, for any first order formula $\varphi$ in $\mathcal{L}_<$.
\end{proof}

The dichotomy result stated in Fact 1 (see Section 1) can be translated in this section into diagrams of categories of o-minimal structures and functors induced by language morphisms: 

\begin{figure}[h]
\[
\xymatrix{
       &\widetilde{\mathcal{L}}\\
\mathcal{L}_{or}\ar@{.>}[ru] & \mathcal{L}_{og}\ar@{_{(}->}[l]^{\jmath}\ar@{^{(}->}[u]_{\imath}
}
\]
\caption{Diagram in \textbf{FOL}}\label{Lang1}
\end{figure}
\noindent where $\mathcal{L}_{or}$ is the language generated by the signature of the ordered rings $L_{or}$, $\mathcal{L}_{og}$ is the language generated by the signature of the ordered groups $L_{og}$ and $\widetilde{\mathcal{L}}$ expands  $\mathcal{L}_{og}$ arbitrarily. Applying the functor $\mathcal{E}_<$, we get

\begin{figure}[h]
\[
\xymatrix{
 & \widetilde{R}\ar[d]^{\mathcal{E}_<(\imath)}\ar@{.>}[ld]\\
(R,<,+,\cdot)\ar[r]_{\mathcal{E}_<(\jmath)} & (R,<,+)
}
\]
\caption{Diagram in \textbf{CAT}}\label{Struc1}
\end{figure}
\noindent where $\widetilde{R}$ is an o-minimal expansion of ordered group $(R,<,+)$.

Similarly, the dichotomy stated in Fact 2 in Section 1 can be read out of the following diagrams: 

\begin{figure}[h]
\[
\xymatrix{
       &\widetilde{\mathcal{L}}\\
\mathcal{L}_{exp}\ar@{.>}[ru] & \mathcal{L}_{or}\ar@{_{(}->}[l]^{\jmath}\ar@{^{(}->}[u]_{\imath}
}
\]
\caption{Diagram in \textbf{FOL}}\label{Lang2}
\end{figure}
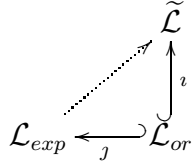
\noindent where $\mathcal{L}_{exp}$ is the language generated by the signature $L_{or}\cup \{\text{exp}\}$ and $\widetilde{\mathcal{L}}$ expands  $\mathcal{L}_{or}$ arbitrarily, and 

\begin{figure}[h]
\[
\xymatrix{
 & \widetilde{R}\ar[d]^{\mathcal{E}_<(\imath)}\ar@{.>}[ld]\\
\overline{\mathbb{R}}_{\text{exp}}\ar[r]_{\mathcal{E}_<(\jmath)} & \overline{\mathbb{R}}
}
\]
\caption{Diagram in \textbf{CAT}}\label{Struc2}
\end{figure}
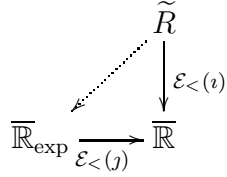
\noindent where $\overline{\mathbb{R}}$ stands for the ordered field of real numbers, $\overline{\mathbb{R}}_{\text{exp}}$ is the exponential real field $(\overline{\mathbb{R}},\exp)$ and  $\widetilde{R}$ is an o-minimal expansion of $\overline{\mathbb{R}}$.

Observe that the dichotomy theorems in Facts 1 and 2 characterize the images of the induced functors as considered above (Definition \ref{functor}).

The above remarks suggest that may be useful to consider the following notion:

\begin{definition}
We define the category {\bf STR} of all structures by means of the Grothendieck construction as follows. 
\begin{itemize}
\item Ob({\bf STR}): $(\mathcal{L},\mathcal{M})$, where $\mathcal{L}$ is a language and $\mathcal{M}\in \textbf{Str}(\mathcal{L})$;
\item For any pair $(\mathcal{L},\mathcal{M})$ and $(\mathcal{L}',\mathcal{N}')$, $\text{Hom}_{\bf STR}((\mathcal{L},\mathcal{M}), (\mathcal{L}',\mathcal{N}'))$ is the set of pairs $(H,\alpha)$ where  $H\colon \mathcal{L} \to \mathcal{L}'$ is a language morphism and $\alpha\colon \mathcal{E}(H)(\mathcal{N}')\to \mathcal{M}$ is a morphism in $\textbf{Str}(\mathcal{L}')$;
\item Composition: $(H', \alpha')\ast (H, \alpha) \mathrel{\mathop:}= (H' \circ H, \alpha \circ \mathcal{E}(H)(\alpha'))$;
\item Identities: $\text{id}_{(\mathcal{L},\mathcal{M})}\mathrel{\mathop:}= (\text{id}_{\mathcal{L}}, \text{id}_{\mathcal{M}})$.
\end{itemize}
We have some variants of $\textbf{STR}$ such as: 
\begin{itemize}
    \item[(a)] $\textbf{STR}_{\textbf{e}}$, where $\alpha$ as in $\textbf{STR}$ are taken to be elementary homomorphims;
    \item[(b)] $\textbf{STR}_{\textbf{e}_1}$, where $\alpha$ as in $\textbf{STR}$ preserve only the validity of first order unary formulas;
    \item[(c)] $\textbf{STR}_<$ constructed analogously to $\textbf{STR}$ for all language expansions $\mathcal{L}_<$;  
    \item[(d)] $\textbf{STR}_{\textbf{o-min}}$ as $\textbf{STR}_<$, with $\mathcal{M}\in \textbf{Str}_{\textbf{o-min}}(\mathcal{L}_<)$.
\end{itemize}
\end{definition}

Note that the dichotomy results expressed in Facts 1 and 2 can also be read in this global context, since the morphism from $(\mathcal{L}_<,\mathcal{M})$ to $(\mathcal{L}'_<,\mathcal{N}')$ is the pair $H_<\colon \mathcal{L}_<\to \mathcal{L}'_<$ and $\alpha\colon \mathcal{E}(H_<)(\mathcal{N}')\to \mathcal{M}$ is the identity homomorphism, that is,   $\mathcal{E}(H_<)(\mathcal{N}')=\mathcal{M}$ and $\alpha=\text{id}_{\mathcal{M}}$.

On the other hand, a more general case in which the map $\alpha$ is not necessarily the identity also occurs in the literature. For instance, 
\\

\noindent\textbf{Fact 3} (\cite{ressayre}). If $\mathcal{M}$ is any nonstandard model of PA, with $(\text{HF}^{\mathcal{M}}, \in^{\mathcal{M}})$ the corresponding nonstandard hereditary finite sets of $\mathcal{M}$ (by Ackerman coding: the natural numbers of $\text{HF}^{\mathcal{M}}$ are
isomorphic to $\mathcal{M}$), then for any consistent computably axiomatized theory $T$ extending ZF in the language of set theory, there is a submodel $\mathcal{N}'\subseteq (\text{HF}^{\mathcal{M}}, \in^{\mathcal{M}})$ such that $\mathcal{N}'\models T$.
\\

\section{Final remarks}

\noindent$\bullet$ It is natural to consider even more general forms of induced functors by changing of languages as in \cite{visser}: for instance, something in this direction already occurred in Facts 1 (and 2) since  $\cdot$ is $\widetilde{L}$ definable in $\widetilde{R}$. This would complete the picture of Facts 1, 2 (that is, it would name the dot arrows in the diagrams shown in Figures \ref{Lang1}, \ref{Struc1}, \ref{Lang2} and \ref{Struc2}). 
\\

\noindent$\bullet$ Are there natural examples of the phenomenon appeared in Fact 3 in the setting of o-minimal structures? That is, a situation involving o-minimal structures and a morphism from  $(\mathcal{L}_<,\mathcal{M})$ to $(\mathcal{L}'_<,\mathcal{N}')$, which is the $H_<\colon \mathcal{L}_<\to \mathcal{L}'_<$ and $\alpha\colon \mathcal{E}(H_<)(\mathcal{N}')\to \mathcal{M}$, where $\mathcal{E}(H_<)(\mathcal{N}')\neq \mathcal{M}$ and/or $\alpha \neq \text{id}_{\mathcal{M}}$. What about with $\alpha$ being an embedding? Or an elementary embedding? Or an $e_1$-elementary embedding, that is, an embedding which preserves formulas with one free variable?

\end{document}